\newtheorem{theorem}{Theorem}[section]    
\newtheorem{lemma}[theorem]{Lemma}          
\newtheorem{proposition}[theorem]{Proposition}  
\theoremstyle{definition}
\newtheorem{definition}[theorem]{Definition}
\newtheorem{example}[theorem]{Example}    
\newtheorem*{remark}{Remark}             
\numberwithin{equation}{section}
\newcommand{\e}{\varepsilon}
\newcommand{\F}{\mathcal F_{ob} }
\newcommand{\Z}{\mathbb{Z}}
\newcommand{\MCG}{\mathcal{MCG}}
\title{On the self-linking number of transverse links}
\author{Tetsuya Ito}
\address{Research Institute for Mathematical Sciences, Kyoto university
Kyoto, 606-8502, Japan}
\email{tetitoh@kurims.kyoto-u.ac.jp}
\urladdr{http://www.kurims.kyoto-u.ac.jp/~tetitoh/}
\author{Keiko Kawamuro}
\address{Department of Mathematics \\ 
The University of Iowa \\ Iowa City, IA 52242, USA}
\email{kawamuro@iowa.uiowa.edu}
\date{\today}
\begin{document}

\begin{abstract}
We review the braid theoretic self-linking number formula in \cite{ik1-1} and study its applications. 
\end{abstract} 

\maketitle
\section{Review of the self-linking number}

Let $M$ be an oriented closed $3$-manifold equipped with a contact structure $\xi$. 
Let $K \subset (M, \xi)$ be an oriented transverse link, namely at every point $p \in K$ the link $K$ is transverse to the contact plane $\xi_p$ positively. 

The self-linking number is an invariant of null-homologous transverse links. 
Suppose that a transverse link $K \subset (M, \xi)$ bounds a Seifert surface $\Sigma \subset M$. 
Choose a nowhere vanishing section $s$ of the rank $2$ vector bundle $\xi|_\Sigma \to \Sigma$ and push $K$ into the direction of $s$ to obtain a copy of $K$, denoted by $K^s$. 
The self-linking number $sl(K, [\Sigma])$ is defined to be the algebraic intersection number of $K^s$ and $\Sigma$. 
In other words $sl(K, [\Sigma]) = - \langle e(\xi), [\Sigma]\rangle$, the evaluation of the Euler class $e(\xi)$ over $[\Sigma]\in H_2(M, K; \Z)$.

In \cite{ik1-1} we study a braid theoretic formula of the self-linking number. 
In this paper we will give applications and observations of the formula. 
In order to review the formula, we recall the following two fundamental results. 

The first one is the Giroux correspondence \cite{giroux3}: 
Given an oriented closed three manifold $M$, there is a one to one correspondence between the set of contact structures on $M$ up to contact isotopy and the set of open book decompositions of $M$ up to positive stabilizations.

The second result is due to Bennequin \cite{Ben}, Mitsumatsu and Mori \cite{MM}, and Pavelescu \cite{P, P2}:  
Suppose that the contact manifold $(M, \xi)$ is corresponding to (in the sense of Giroux) the open book $(S, \phi)$, where $S$ is a compact oriented surface with non-empty boundary and $\phi:S\to S$ is a diffeomorphism fixing the boundary of $S$ point-wise.  
(In the following, we say that $(M, \xi)$ is {\em supported by} $(S, \phi)$.) 
There is a one to one correspondence between the set of transverse links in $(M, \xi)$ up to transverse isotopy and the set of braids with respect to $(S, \phi)$ up to braid isotopy and positive stabilization.

Here by a {\em braid with respect to} $(S, \phi)$ we mean an oriented link in $M=M_{(S, \phi)}$ that intersects every page of $(S, \phi)$ positively and is never tangent to any pages.   
A close connection between transverse links and braids was first observed by Bennequin \cite{Ben} for the case $(S, \phi) = (D^{2},id)$. 
The general case was studied by Mitsumatsu and Mori \cite{MM} and Pavelescu \cite{P, P2}.

In order to state the self-linking number formula we fix notations here. 
Let $S=S_{g, r}$ be a surface of genus $g$ and with $r$ boundary components, $(S, \phi)$ an open book, and $\widehat b$ a null-homologous $n$-stranded braid with respect to  $(S, \phi)$. 
We may identify the group of $n$-stranded braids with respect to  $(S, \phi)$ with the fundamental group of the configuration space of $S$ with $n$ punctures.  
We place $n$ points near one of the boundary components of $S$ and choose the generators $\{ \sigma_1, \ldots, \sigma_{n-1}, \rho_1, \ldots, \rho_{2g+r-1} \}$ of the braid group as depicted in Figure~\ref{braid-generator}. 
\begin{figure}[htbp] 
\begin{center}
\SetLabels
(.81*.38) $\rho_1$\\
(.54*.4) $\rho_{r-1}$\\
(.49*.6) $\rho_{r}$\\
(.4*.8)  $\rho_{r+1}$\\
(.3*.4) $\rho_{2g+r-2}$\\
(.17*.85) $\rho_{2g+r-1}$\\
(.32*.1) $\sigma_i$\\
(.16*.35) $1$\\
(.29*.05) $i$\\
(.38*.04) $i+1$\\
(.51*.16) $n$\\
\endSetLabels
\strut\AffixLabels{\includegraphics*[height=4.5cm]{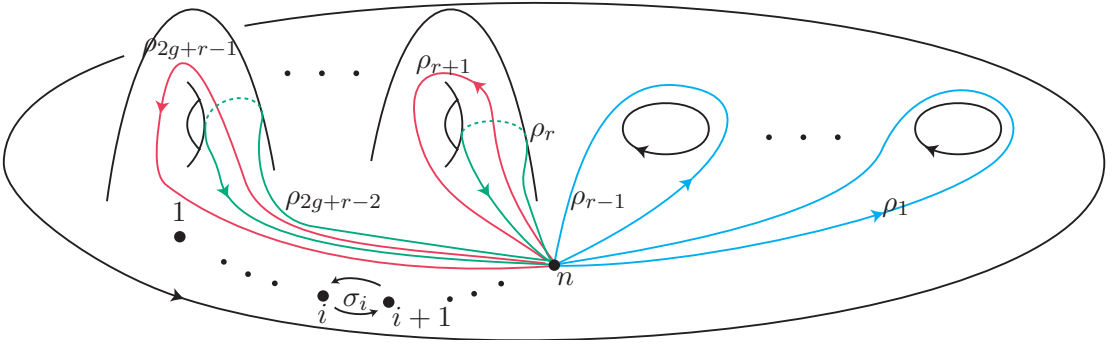}}
\caption{Generators of the braid group.}\label{braid-generator}
\end{center}
\end{figure}

Suppose that the closed braid $\widehat b$ is represented by a braid word 
$$b_1^{\e_1} b_2^{\e_2} \cdots b_l^{\e_l}$$ 
where $b_i \in \{ \sigma_1, \ldots, \sigma_{n-1}, \rho_1, \ldots, \rho_{2g+r-1} \}$ and $\e_i \in \Z$. 
Let $[\rho_i] \in H_1(S; \Z)$ denote the homology class of the loop $\rho_i$ and put $[\sigma_j]:= 0 \in H_1(S; \Z)$. 
Define 
$$[b] := \e_1 [b_1] + \cdots + \e_l [b_l] \in H_1(S;\Z).$$ 
We note that $[b]$ is an element of $H_1(S; \Z)$ and different from $[\widehat b] = 0 \in H_1(M; \Z)$.

The null-homologous assumption on $\widehat b$ implies the following.

\begin{lemma}\label{lemma-a}\cite[Claim 3.8]{ik1-1}
There exists (not necessarily unique) a homology class $a\in H_1(S, \partial S;\Z)$ such that 
\begin{equation}
\label{eqn:Seifert}
[b] = a - \phi_*(a)  \in H_{1}(S;\Z).
\end{equation}  
\end{lemma}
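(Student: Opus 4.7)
The plan is to deduce (\ref{eqn:Seifert}) from two ingredients: a description of $H_{1}(M;\Z)$ as an explicit quotient of $H_{1}(S;\Z)$, and the identification of $[\widehat{b}]\in H_{1}(M;\Z)$ as the image of $[b]$ under that quotient. Granted these, the hypothesis $[\widehat{b}]=0\in H_{1}(M;\Z)$ translates directly into the existence of $a\in H_{1}(S,\partial S;\Z)$ with $[b]=a-\phi_{*}(a)$.

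For the first ingredient, decompose $M=M_{\phi}\cup(\bigsqcup_{j=1}^{r}N_{j})$, where $M_{\phi}$ is the mapping torus of $\phi$ and each $N_{j}\cong D^{2}\times S^{1}$ is a tubular neighborhood of a binding component. The Wang sequence of the fibration $S\hookrightarrow M_{\phi}\to S^{1}$ presents $H_{1}(M_{\phi};\Z)$ as an extension of $H_{0}(S;\Z)=\Z$ by $H_{1}(S;\Z)/(1-\phi_{*})H_{1}(S;\Z)$. Mayer--Vietoris for the gluing kills the binding meridians $m_{j}=\{p_{j}\}\times S^{1}$, with $p_{j}\in\partial_{j}S$. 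Because $\phi|_{\partial S}=\id$, for any $1$-chain $\gamma\subset S$ running from $p_{j}$ to $p_{k}$, the mapping-torus trace of $\gamma$ supplies a $2$-chain whose boundary is $m_{j}-m_{k}-(1-\phi_{*})\gamma$; thus $[m_{j}]-[m_{k}]$ lies in $(1-\phi_{*})H_{1}(S,\partial S;\Z)$ in $H_{1}(M_{\phi};\Z)$. Assembling these contributions yields
$$H_{1}(M;\Z)\cong H_{1}(S;\Z)/(1-\phi_{*})H_{1}(S,\partial S;\Z),$$
where $(1-\phi_{*})\colon H_{1}(S,\partial S;\Z)\to H_{1}(S;\Z)$ is well-defined precisely because $\phi$ fixes $\partial S$ pointwise. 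For the second ingredient, each of the $n$ strands of $\widehat{b}$ wraps once around the $S^{1}$-direction, so $[\widehat{b}]=n[\mu]+j_{*}[b]$ in $H_{1}(M_{\phi};\Z)$, where $[\mu]$ is the class of any meridian and $j\colon S\hookrightarrow M_{\phi}$ is the fiber inclusion; this is verified generator-by-generator, observing that $\sigma_{i}$ contributes $0$ and $\rho_{i}$ contributes $[\rho_{i}]$ to the page-tangential part, matching the definition of $[b]$. Since $[\mu]=[m_{j}]=0$ in $H_{1}(M;\Z)$, we obtain $[\widehat{b}]=j_{*}[b]$ there, and the lemma follows.

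The principal technical obstacle is ensuring that the \emph{relative} group $H_{1}(S,\partial S;\Z)$, rather than the absolute group $H_{1}(S;\Z)$, is what must appear on the right-hand side of (\ref{eqn:Seifert}). This is forced at the step comparing the distinct binding meridians $m_{j}$: the connecting chain $\gamma$ between points on two different boundary components of $S$ is naturally a relative $1$-cycle in $(S,\partial S)$, not an absolute cycle, and the image of $(1-\phi_{*})$ is enlarged accordingly. Carefully tracking this enlargement through the Mayer--Vietoris computation is the bulk of the argument.
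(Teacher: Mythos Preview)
Your argument is correct: the Wang sequence for the mapping torus combined with the Mayer--Vietoris computation for the binding fill-in yields $H_{1}(M;\Z)\cong H_{1}(S;\Z)/(1-\phi_{*})H_{1}(S,\partial S;\Z)$, and tracking the closed braid through this isomorphism shows that null-homologous $\widehat{b}$ forces $[b]\in\operatorname{im}(1-\phi_{*})$. The step you flag as delicate---that the relations coming from the distinct meridians $[\mu_{j}]-[\mu_{k}]$ enlarge the denominator from $(1-\phi_{*})H_{1}(S;\Z)$ to $(1-\phi_{*})H_{1}(S,\partial S;\Z)$---is handled properly by your trace-of-an-arc argument, and the long exact sequence of the pair $(S,\partial S)$ confirms that these arcs together with $H_{1}(S;\Z)$ generate $H_{1}(S,\partial S;\Z)$.

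However, this is a genuinely different route from the one the paper (via \cite[Claim~3.8]{ik1-1}) takes. There the class $a$ is produced \emph{geometrically}: one chooses a Seifert surface $\Sigma$ for $\widehat{b}$, puts it in general position with respect to the pages, and sets $a:=[\Sigma\cap S_{1}]\in H_{1}(S,\partial S;\Z)$. Since $\Sigma\cap S_{0}=\phi(\Sigma\cap S_{1})$ and $\partial\Sigma=\widehat{b}$, the difference $[\Sigma\cap S_{1}]-[\Sigma\cap S_{0}]$ records exactly the page-tangential homology accumulated by the braid, i.e.\ $[b]$. Your algebraic approach is cleaner as a pure existence statement and explains conceptually why the relative group $H_{1}(S,\partial S;\Z)$ must appear. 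The paper's geometric approach, on the other hand, buys something essential for the sequel: it exhibits $a$ as $[\Sigma\cap S_{1}]$ for a specific Seifert surface, which is precisely the input needed to define the surface $\Sigma_{a}$, the OB-cobordism $F$, and the function $c([\phi],a)$ in Theorem~\ref{theorem:sl-formula}. Your proof establishes the lemma but does not by itself supply the correspondence $a\leftrightarrow[\Sigma]$ that the self-linking formula relies on.
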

It is implicit in the proof of \cite[Claim 3.8]{ik1-1} that the homology class $a \in H_1(S, \partial S;\Z)$ is represented by properly embedded circles and arcs that  do not intersect the $n$ punctures of $S$. 
Since $\phi=id$ near $\partial S$ we may regard $a - \phi_*(a) \in H_{1}(S;\Z)$.

The homology class $a\in H_1(S, \partial S;\Z)$ depends on choice of a  Seifert surface $\Sigma$ of $\widehat{b}$ and is represented by $S_1 \cap \Sigma$,   the intersection of $\Sigma$ with the $t=1$ page, $S_1$, of the open book. 
Thus $a$ is uniquely determined by the homology class $[\Sigma] \in H_{2}(M,\widehat{b})$. 
Conversely, in Theorem \ref{theorem:sl-formula} below we construct a Seifert surface $\Sigma_{a}$ from $a$, i.e., the homology class $[\Sigma_{a}]\in H_{2}(M,\widehat{b})$ is uniquely determined by $a$.

Let 
$a^{\rm{NF}}$ 
denote the {\em normal form} of $a\in H_1(S, \partial S;\Z)$, that is a properly embedded multi-curve representing $a$. 
See \cite[Definition 3.3]{ik1-1}) for the precise definition of the normal form, where we use the notation $N(a)$ instead of $a^{\rm NF}$.
Let $F$ be a cobordism surface from $\phi(a^{\rm NF})$ to $(\phi_* (a))^{\rm NF}$, i.e., a compact, oriented, properly embedded surface in $S \times [0, 1]$ such that 
\begin{itemize}
\item 
$F \cap (S \times \{0\})  = \partial F \cap (S \times \{0\}) = - \phi(a^{\rm NF}) \times \{0\}$.
\item 
$F \cap (S \times \{1\}) =  \partial F \cap (S \times \{1\}) =(\phi_*(a))^{\rm NF} \times \{1\}$.
\item 
$\partial (\phi(a^{\rm NF}))= \phi(a^{\rm NF}) \cap \partial S = (\phi_*(a))^{\rm NF} \cap \partial S= \partial ((\phi_*(a))^{\rm NF})$. 
\item 
$\partial F = (-\phi(a^{\rm NF}) \times \{0\}) \cup ((\phi_*(a))^{\rm NF} \times \{1\}) \cup (\partial (\phi(a^{\rm NF})) \times [0,1])$.
\item All the singularities of the singular foliation $\{(S \times \{t\}) \cap F \ | \ t \in [0,1]\}$ on $F$ are of hyperbolic type.
\end{itemize}
Such a surface $F$ exists (not necessarily uniquely) and is called an {\em OB cobordism} in \cite[Definition 3.4]{ik1-1}.

Let $h_\pm (F)$ denote the number of $\pm$ hyperbolic singularities of the above mentioned singular foliation on $F$. 
It is proven in \cite[Prop 3.6]{ik1-1} that the algebraic count of the hyperbolic points $h_+(F) - h_-(F)$ is uniquely determined by $[\phi]$ and $a$. 
Hence the following function $c([\phi], a)$ is well-defined. 

\begin{definition}\label{def of function c}
Let $[\phi] \in \MCG(S)$ and $a \in H_{1}(S,\partial S)$. 
Define
\[ c([\phi], a) := h_+(F) - h_-(F). \]
\end{definition} 

\begin{theorem}[Self linking number formula \cite{ik1-1}]\label{theorem:sl-formula}

Let $\widehat b$ be an $n$-stranded closed braid with respect to the open book $(S, \phi)$. Suppose that $b=b_1^{\e_1} b_2^{\e_2} \cdots b_l^{\e_l}$ $(\e_i \in \Z)$ is a braid word of $\widehat b$.  
Let $[b] \in H_1(S;\Z)$ and $a \in H_1(S, \partial S;\Z)$ be as above. 
There exists a Seifert surface $\Sigma=\Sigma_a$ of $\widehat{b}$ such that the self-linking number satisfies the formula 
\begin{equation}\label{eq:self-linking}
sl(\widehat{b}, [\Sigma]) = -n + \widehat{\exp}(b) - \phi_*(a)\cdot[b] + c([\phi] , a).
\end{equation}
Here $\phi_*(a) \cdot [b]$ represents the intersection pairing 
$H_{1}(S,\partial S) \times H_{1}(S) \rightarrow \Z$ and $\widehat{\exp}(b)$ is a generalized exponent sum defined by
$$\widehat{\exp}(b)= \sum_{i=1}^l \e_i-\sum_{1\leq j<i \leq l} \e_i \e_j [b_j] \cdot [b_i].$$
\end{theorem}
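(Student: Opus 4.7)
The plan is to prove the formula by constructing an explicit Seifert surface $\Sigma_{a}$ for $\widehat{b}$ from the data $(b, a, F)$ and then computing $sl(\widehat{b},[\Sigma_a]) = -\langle e(\xi),[\Sigma_a]\rangle$ as the signed count of zeros of a natural section of $\xi|_{\Sigma_a}$ adapted to the open book.

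I would assemble $\Sigma_{a}$ from three pieces. Near the boundary component of $S$ carrying the $n$ punctures, the braid word $b_{1}^{\e_{1}}\cdots b_{l}^{\e_{l}}$ is performed inside a collar $S\times[0,1]$ and determines a Bennequin-type piece $\Sigma^{{\rm br}}$ which bounds $\widehat{b}$ along its braid boundary but leaves a residual $1$-cycle in the top page representing $[b]\in H_{1}(S;\Z)$. Using Lemma~\ref{lemma-a}, I pick $a\in H_{1}(S,\partial S;\Z)$ with $[b]=a-\phi_{*}(a)$ and attach the product cobordism $a^{\rm NF}\times[0,1]$, which cancels the $a$-part of that residual cycle. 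The remaining $\phi_{*}(a)$-part is killed after the monodromy identification by gluing in the OB cobordism $F$ from $\phi(a^{\rm NF})$ to $(\phi_{*}(a))^{\rm NF}$; a direct boundary check shows that the result $\Sigma_{a}$ is a Seifert surface for $\widehat{b}$ whose homology class corresponds to $a$ in the sense already indicated after Lemma~\ref{lemma-a}.

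For the Euler class computation, I would choose a section $s$ of $\xi$ that, outside a neighborhood of the binding, is essentially tangent to the pages of the open book (a natural choice provided by the Giroux correspondence), and count zeros of $s|_{\Sigma_a}$ separately on each piece. On $\Sigma^{{\rm br}}$ the $n$ punctures at the top page each contribute a zero of sign $-1$, giving $-n$, while each letter $b_{i}^{\e_{i}}$ contributes zeros whose signed total, after resolving crossings between different generators, equals $\widehat{\exp}(b)$; the quadratic correction $-\sum_{j<i}\e_{i}\e_{j}[b_{j}]\cdot[b_{i}]$ is exactly the algebraic intersection count of the strands representing $b_j$ and $b_i$ inside the page. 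On the product piece $a^{\rm NF}\times[0,1]$ the zeros occur along intersections with the braid word cycle, and the signed count is $-\phi_{*}(a)\cdot[b]$, the factor $\phi_*$ appearing because the product piece is seen by the braid through the monodromy identification. On $F$, the tangencies of $s$ to the pages are by construction the hyperbolic singularities of the foliation $\{(S\times\{t\})\cap F\}$, so their signed count is $c([\phi],a)$ by Definition~\ref{def of function c}.

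The main obstacle is the bookkeeping on $\Sigma^{{\rm br}}$: the correction $-\sum_{j<i}\e_{i}\e_{j}[b_{j}]\cdot[b_{i}]$ is not visually obvious because two $\rho$-generators realised in the same collar must be pulled apart into embedded strands that algebraically cross each other, and matching this local crossing count to the homological pairing on $H_{1}(S)$ requires a careful local model in the collar. A secondary point is independence of the final count from the many auxiliary choices (the representative $a^{\rm NF}$, the cobordism $F$, and the section $s$); for the hyperbolic contribution this is exactly the content of \cite[Prop 3.6]{ik1-1}, and for the remaining choices one relies on the fact that $sl$ depends only on $[\Sigma_a]\in H_{2}(M,\widehat{b};\Z)$, which is itself determined by $a$.
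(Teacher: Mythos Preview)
This theorem is not proved in the present paper: it is quoted from \cite{ik1-1} (the citation appears in the theorem header), and the surrounding text only reviews the ingredients $a$, $a^{\rm NF}$, $F$, and $c([\phi],a)$ without reproducing the argument. So there is no in-paper proof against which to compare your proposal.

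That said, your outline is consistent with the strategy one can infer from the paper's discussion and from how \cite[Theorem~3.10]{ik1-1} is invoked later (e.g.\ in the proofs of Propositions~\ref{prop:sharpness} and the last proposition). The assembly of $\Sigma_a$ from a Bennequin-type piece, a product $a^{\rm NF}\times[0,1]$, and the OB cobordism $F$ matches the setup reviewed before the theorem. One difference in language: the argument in \cite{ik1-1} is phrased via the \emph{open book foliation} $\F(\Sigma_a)$ and the identity $sl=-(e_+-e_-)+(h_+-h_-)$ (used explicitly at the end of the proof of the last proposition here), rather than via zeros of a section of $\xi|_{\Sigma_a}$. These viewpoints coincide once one uses that $\xi$ is $C^{0}$-close to the tangent distribution of the pages, but the foliation language makes the bookkeeping cleaner: the $-n$ is $-e_+$ from the $n$ a-arc elliptics, the endpoints of $a^{\rm NF}$ on $\partial S$ contribute paired $e_\pm$ from b-arcs, $\widehat{\exp}(b)$ and $-\phi_*(a)\cdot[b]$ are hyperbolic counts coming from the braid region (cf.\ the proof of Proposition~\ref{prop:sharpness}, where $\phi_*(a)\cdot[b]$ is identified with $h_-$ for positive braids), and $c([\phi],a)=h_+(F)-h_-(F)$ by definition. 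Your attribution of $-\phi_*(a)\cdot[b]$ to the product piece with the explanation ``seen through the monodromy'' is a bit loose; in the foliation picture this term arises from hyperbolic singularities where the braid crosses the curves $\phi_*(a)^{\rm NF}$, not from the product piece itself.
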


As discussed in \cite[Propositions 3.20 and 3.21]{ik1-1} it turns out that $c([\phi],a)$ has a decomposition $c([\phi], a) = c'([\phi], a) - 2k([\phi], a)$.

The first term $c'([\phi], a)$ carries homological information: 
With a basis $\{[\rho'_{1}],\ldots, [\rho'_{2g+r-1}]\}$ of $H_{1}(S,\partial S)$ that is  `dual' to $\{[\rho_{1}],\ldots, [\rho_{2g+r-1}]\}$ of Figure~\ref{braid-generator}, $c'([\phi], a)$ is determined by the homology intersections of $[\rho'_{i}]$ and $[\phi_{*}(\rho'_{i})]$. 

The second term $k(\phi,a)$ is more interesting.
It is a crossed homomorphism, i.e.,
\begin{gather*}
\begin{cases}
k([\psi\phi], a)= k([\phi], a) + k([\psi], \phi_* (a)), \\
k([\phi], a+a') = k([\phi], a) + k([\phi], a').
\end{cases}
\end{gather*}
Moreover, $k(\phi,a)$ represents a non-trivial cohomology class in $H^{1}(\MCG(S),H^{1}(S))$ that appears in various contexts in mapping class group theory. 
In our setting $k([\phi], a)$ coincides with Morita's version of the crossed homomorphism in \cite[\S 6]{mo}.

Suppose that $S$ has connected boundary. 
Let $H:= H^1(S;\Z) \simeq H_1(S,\partial S; \Z)$. 
In \cite{mo3} Morita proves the existence of a crossed homomorphism $\kappa: \MCG(S) \rightarrow \frac{1}{2} \wedge^{3} H$ and shows that $\kappa$ is the unique extension of the Johnson homomorphism $\tau: \mathcal I \to \wedge^3 H$ up to coboundaries for $H^1(\MCG(S), \wedge^3 H)$, where $\mathcal I$ is the Torelli group for $S$. 
Our map $k$ is obtained by $k=C\circ \kappa$, that is  composition of $\kappa$ and the contraction $C: \wedge^3 H \to H;$ $x\wedge y \wedge z \mapsto 2((x\cdot y)z+(y\cdot z)x + (z\cdot x) y)$.

\subsection{The disk open book}\label{subsec:disk}

If $(S, \phi)=(D^2, id)$ then the compatible contact structure is the standard contact $3$-sphere and the braid group is generated by the transpositions $\{ \sigma_1, \ldots, \sigma_{n-1}\}$. 
In \cite{Ben} Bennequin proves that for an $n$-braid $b=\sigma_{i_1}^{\e_1} \sigma_{i_2}^{\e_2} \cdots \sigma_{i_l}^{\e_l}$,  ($\e_i = 1$ or $-1$) 
\begin{equation}\label{sl-number-for-disk}
sl(\widehat{b}, [\Sigma]) = -n + \exp(b)
\end{equation}
where $\exp(b)= \sum_{i=1}^l \e_i$ is the usual exponent sum of the braid word (note $[\sigma_i]=0$ so $\exp(b)=\widehat{\exp}(b)$) and $\Sigma$ is the so called {\em Bennequin surface} of $b$.  
The Bennequin surface consists of $n$ parallel stacked disks positively pierced by the binding (the braid axis) and joined by twisted bands. 
More precisely, each positive (resp. negative) braid word $\sigma_i$ (resp. $\sigma^{-1}$) corresponds to a positively (resp. negatively) twisted band joining the  $i^{\rm th}$ and the $i+1^{\rm st}$ disks.

\begin{remark}
Since $H_2(S^3; \Z) = 0$ the self-linking number does not depend on choice of a Seifert surface of $\widehat b$ and it makes sense to denote it simply by $sl(\widehat b)$. 
We denote $sl(\widehat b, [\Sigma])$ in (\ref{sl-number-for-disk}), instead of $sl(\widehat b)$, in order to remember that our particular construction of $\Sigma=\Sigma_a$ in Theorem~\ref{theorem:sl-formula} is a generalization of the Bennequin surface. 
\end{remark}

\subsection{Annulus open books} 
Let $(S, \phi)=(A, T^k)$ be the annulus open book with the $k^{\rm th}$ power of the positive Dehn twist $T$ about a core circle of an annulus $A$. 
In \cite{KP} it is proven that 
$$sl(b, [\Sigma])= -n + \exp(b) - \phi_*(a)\cdot [b]$$
for some Seifert surface $\Sigma$ that is a generalization of the Bennequin  surface. 
\begin{remark}
When $k\neq 0$ we remark that the self-linking number does not depend on choice of a Seifert surface because the second homology of the manifold vanishes. 
\end{remark}

\subsection{Planar open books}
When $S$ is a planar surface it is shown in \cite{K} that
$$sl(b, [\Sigma])= -n + \exp(b) - \phi_*(a)\cdot [b] + c'([\phi], a).$$

\section{Applications}

\subsection{Positive braids and Bennequin-Eliashberg inequality} 

One of the celebrated results in contact topology is the Bennequin-Eliashberg inequality.

\begin{theorem}[The Bennequin-Eliashberg inequality \cite{el2}]
\label{theorem:BEinequality}
If a contact 3-manifold $(M,\xi)$ is tight then for any null-homologous transverse link $L$ and its Seifert surface $\Sigma$ we have 
\[ sl(L,[\Sigma]) \leq -\chi(\Sigma). \]
\end{theorem}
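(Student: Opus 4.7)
The plan is to pass to the characteristic foliation $\mathcal{F}_\xi := \xi \cap T\Sigma$ on the Seifert surface, reduce the inequality to a comparison between numbers of singularities of each sign, and then exploit tightness via Giroux's elimination lemma. After a $C^\infty$-small isotopy rel $\partial \Sigma = L$, I would arrange that $\mathcal{F}_\xi$ is Morse--Smale with only elliptic and hyperbolic singularities, each carrying a sign $\pm$ according to whether the orientations of $\xi$ and $T\Sigma$ agree at the tangency. Since $L$ is positively transverse to $\xi$, the boundary is automatically positively transverse to $\mathcal{F}_\xi$, so all singularities are interior. Write $e_\pm$ and $h_\pm$ for the numbers of signed elliptic and hyperbolic singularities, and split $\Sigma = \Sigma_+ \cup \Sigma_-$ according to the sign of the co-orientation.

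Summing indices via Poincar\'e--Hopf on the subsurfaces $\Sigma_\pm$ gives $\chi(\Sigma) = (e_+ - h_+) + (e_- - h_-)$, while a standard computation of $sl(L,[\Sigma])$ as the relative Euler number of $\xi|_\Sigma$ with respect to the trivialization along $L$ yields $sl(L,[\Sigma]) = (e_- - h_-) - (e_+ - h_+)$. Adding these identities,
\[ sl(L,[\Sigma]) + \chi(\Sigma) = 2(e_- - h_-), \]
so the Bennequin--Eliashberg inequality is equivalent to the purely topological assertion
\[ e_- \leq h_-. \]

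To establish this, I would invoke tightness. Given any negative elliptic point $p$ of $\mathcal{F}_\xi$, I would argue that some separatrix of a negative hyperbolic point must limit on $p$: otherwise, a careful analysis of the leaves of $\mathcal{F}_\xi$ in the basin of $p$ produces a limit cycle or separatrix polygon in $\Sigma_-$ that, in the presence of tightness, can be capped off to yield an embedded overtwisted disk, a contradiction. Once such a matched pair $(p,q)$ is identified, Giroux's elimination lemma permits a $C^0$-small isotopy of $\Sigma$ cancelling both singularities without introducing new ones. Iterating this procedure pairs every negative elliptic point with a negative hyperbolic, giving $e_- \leq h_-$. The main obstacle is precisely this tightness step: constructing an overtwisted disk from an unmatched negative elliptic point requires a delicate global analysis of $\mathcal{F}_\xi$, which is the technical heart of Eliashberg's argument in \cite{el2}, and in writing out a complete proof I would invoke that result directly rather than reproduce it.
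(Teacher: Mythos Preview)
The paper does not supply a proof of this theorem; it is quoted as a result of Eliashberg~\cite{el2} and used as input for the applications that follow. There is therefore nothing in the paper to compare your argument against.

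That said, your outline is the classical route via the characteristic foliation, and the reduction $sl(L,[\Sigma])+\chi(\Sigma)=2(e_- - h_-)$, hence the equivalence with $e_-\leq h_-$, is correct. The paper in fact records exactly this reformulation at the start of the proof of Proposition~\ref{prop:sharpness}, though there $e_\pm,h_\pm$ count singularities of the \emph{open book} foliation $\F(\Sigma)$ rather than the characteristic foliation you use; the two foliations are related but not identical, and the open-book version of $e_-\leq h_-$ is likewise not proved in the paper but inherited from the cited inequality. Your candor that the hard step---producing an overtwisted disk from an unmatched negative elliptic point---is the genuine content of~\cite{el2} and must be invoked rather than reproduced is entirely appropriate.
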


It is interesting to ask when the Bennequin-Eliashberg (BE) inequality is sharp. 

We say that a closed braid $\widehat b$ with respect to the open book $(D^2, id)$ is {\em strongly quasi-positive} if $\widehat b$ is represented by a braid word 
$\sigma_{i_1, j_1} \cdots \sigma_{i_l, j_l}$ where 
$$\sigma_{i, j}= (\sigma_i \cdots \sigma_{j-2}) \sigma_{j-1} (\sigma_i \cdots \sigma_{j-2})^{-1}$$ and $i<j$. 
Clearly a positive braid is strongly quasi-positive. 
A strongly quasi-positive braid bounds a particular Seifert surface which consists of $n$ disks pierced by the binding and the $i^{\rm th}$ and the $j^{th}$ disks are joined by a positively twisted band for each $\sigma_{i, j}$ in the braid word $\sigma_{i_1, j_1} \cdots \sigma_{i_l, j_l}$. 
It is easy to verify that the BE inequality is sharp on a strongly quasi-positive braid with the above mentioned Seifert surface.

However, in general, this is not the case if $(S, \phi)\neq (D^2, id)$:

Consider the simplest non-trivial open book, $(A, T^k)$,  the annulus open book with the $k^{\rm th}$ $(k\geq 0)$ power of the positive Dehn twist $T$. The contact structure $\xi_{(A, T^k)}$ is tight. 
If a closed braid $\widehat b$ with respect to  $(A, T^k)$ is null-homologous then by Lemma~\ref{lemma-a} we have $[b]= sk[\rho] \in H_1(S, \Z)$ 
for some $s\in\Z$. 
Suppose that $\widehat b$ is a {\em positive} braid, that is, $\widehat b$ is represented by a positive braid word $b=b_1\cdots b_l$ in $\{\sigma_1, \ldots, \sigma_{n-1}, \rho\}$. 
Positivity of $\widehat b$ implies $s \geq 0$.  
Let $\Sigma$ be a Seifert surface of $\widehat b$ constructed in the manner  discussed in \S 3.2 of \cite{ik1-1}. 

\begin{proposition}\label{prop:sharpness}
The Bennequin-Eliashberg inequality is sharp on $(\widehat b, \Sigma)$ if and only if $s=0$ or $s=k=1$. 
\end{proposition}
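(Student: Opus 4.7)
My plan is to compute $sl(\widehat{b},[\Sigma])$ and $\chi(\Sigma)$ explicitly as functions of $n,\exp(b),s,k$, and then check when they sum to zero.

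For the self-linking number I invoke the annulus formula of \S 1.2,
\[ sl(\widehat b,[\Sigma])\;=\;-n+\exp(b)-\phi_*(a)\cdot[b]. \]
Because $T$ fixes $\partial A$ pointwise and $[\rho]$ vanishes in $H_1(A,\partial A;\Z)$, the map $T^{k}_{*}$ is trivial on $H_1(A,\partial A;\Z)\simeq\Z\langle[\rho']\rangle$, so $\phi_*(a)=a$. Combined with $[b]=sk[\rho]$, Lemma~\ref{lemma-a} forces $a=\pm s[\rho']$; fixing signs by the base case $(n,k,s,b)=(1,1,1,\rho)$ (the transverse unknot in the standard $(S^3,\xi_{\mathrm{std}})$ realizing $sl=-1$) yields $\phi_*(a)\cdot[b]=s^{2}k$, and therefore
\[ sl(\widehat b,[\Sigma])\;=\;-n+\exp(b)-s^{2}k. \]

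Next I would analyze $\chi(\Sigma)$ via the construction of $\Sigma_{a}$ in \S 3.2 of \cite{ik1-1}. Decomposing $\Sigma_{a}$ into its natural pieces yields $+n$ from the $n$ braid disks, $-\exp(b)$ from the positively twisted bands (one per letter of $b$), and a contribution from the $a$-part of the surface: the $s$ cocore arcs of $a^{\rm NF}$ contribute $+2s$ elliptic critical points (two endpoints each on $\partial A$), while the OB cobordism $F$ must unwind the $k$-fold spirals $T^{k}(a^{\rm NF})$ back to $a^{\rm NF}$, costing $sk$ hyperbolic singularities. Summing,
\[ \chi(\Sigma)\;=\;n-\exp(b)+s(2-k). \]
A parity check is reassuring: $\chi\equiv n-p\pmod 2$ (where $p$ is the number of $\sigma$-letters of $b$), which agrees with the number of components of $\widehat b$ modulo $2$.

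Combining,
\[ sl(\widehat b,[\Sigma])+\chi(\Sigma)\;=\;s\bigl(2-k(1+s)\bigr), \]
so the BE inequality is sharp iff $s=0$ or $k(1+s)=2$. Since $s,k\geq 0$ are integers with $s\geq 1\Rightarrow k\geq 1$ (because $sk$ equals the number of $\rho$-letters in the positive word $b$), the equation $k(1+s)=2$ admits only the solution $s=k=1$, which establishes the claim. The main technical obstacle is the Euler-characteristic count, specifically identifying $h_{+}(F)+h_{-}(F)=sk$ for the minimal OB cobordism in the annulus case.
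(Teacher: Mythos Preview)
Your self-linking computation is fine, but the Euler-characteristic count is wrong, and you reach the right conclusion only by a numerical coincidence.

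The error is in the ``$a$-part'' of $\Sigma$. You account for the $2s$ elliptic points at the endpoints of the $s$ arcs of $a^{\rm NF}$ and for a putative $sk$ hyperbolic points in the OB cobordism $F$, but you miss the hyperbolic singularities created when the braid interacts with those arcs. Concretely, every time a braid strand performs a $\rho$-move it must cross each of the $s$ essential arcs of $a^{\rm NF}$ once; there are $sk$ $\rho$-letters in the positive word $b$, so this produces $s\cdot sk = s^{2}k$ hyperbolic singularities, all negative. (This is exactly the statement, quoted from \cite{ik1-1} in the paper's proof, that for a positive braid $h_{-}=\phi_{*}(a)\cdot[b]$.) Your band count ``one per letter of $b$'' treats a $\rho$-letter as contributing a single hyperbolic point, which is correct only when $s\le 1$. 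With the correct count one gets
\[
\chi(\Sigma)=(e_{+}+e_{-})-(h_{+}+h_{-})=(n+2s)-\bigl(\exp(b)+s^{2}k\bigr),
\]
hence $sl+\chi=2s(1-sk)$, not $s\bigl(2-k(1+s)\bigr)$. The two expressions disagree already at $s=2,\,k=1$ (values $-4$ versus $-2$); your parity check $\chi\equiv n-p\pmod 2$ cannot detect this because $sk(s+1)$ is always even.

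The paper sidesteps any direct computation of $\chi$ by rewriting the BE inequality in open-book-foliation form as $e_{-}\le h_{-}$ (which follows from $sl=-(e_{+}-e_{-})+(h_{+}-h_{-})$ and $-\chi=-(e_{+}+e_{-})+(h_{+}+h_{-})$), and then reading off $e_{-}=s$ and $h_{-}=\phi_{*}(a)\cdot[b]=s^{2}k$ directly from the construction of $\Sigma$ in \cite{ik1-1}. Sharpness is then the equation $s=s^{2}k$. This is both shorter and avoids the delicate issue you flag at the end, namely the total (as opposed to signed) hyperbolic count $h_{+}(F)+h_{-}(F)$ of the OB cobordism, which is not needed at all.
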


In the case of $s=0$ the positive word $b_1\cdots b_l$ does not contain $\rho$. Hence the braid lies near a small tubular neighborhood of one of the binding components and $\Sigma$ is nothing but the Bennequin surface explained in \S~\ref{subsec:disk}. 

In the case of $s=k=1$ the open book is a positive stabilization of $(D^2, id)$ and the braid $b$ is a positive stabilization of some positive braid in $(D^2, id)$. 

Therefore Proposition~\ref{prop:sharpness} implies that, except for the two trivial cases mentioned above, the BE-inequality is not sharp on positive braids. 

In the following proof we use techniques of open book foliations studied in \cite{ik1-1}. 

\begin{proof}
In terms of $e_{\pm}, h_{\pm} \in\Z$, the numbers of $\pm$ elliptic/hyperbolic singularities of the open book foliation $\F(\Sigma)$, the BE-inequality is restated as
$$-(e_+ - e_-) + (h_+ - h_-) \leq -(e_+ + e_-) + (h_+ + h_-),$$ 
that is,  $e_- \leq h_-.$

By our specific construction of $\Sigma$ introduced in the proof of \cite[Theorem 3.10]{ik1-1} the normal form $a^{NF}$ of the homology class $a$ is parallel essential $s$ arcs in the annulus $A$ connecting the two  boundary components. 
The endpoints of each arc become elliptic points with distinct signs of the open book foliation $\F(\Sigma)$. 
Therefore, we have $s$ negative elliptic points, that is, 
$$e_- = s.$$
Also the proof of \cite[Theorem 3.10]{ik1-1} shows that  the term $\phi_*(a) \cdot [b]$ in the self-linking number formula (\ref{eq:self-linking}) is exactly the number of negative hyperbolic points of the open book foliation $\F(\Sigma)$ provided $\widehat b$ is a positive braid. 
We have 
$$h_- = \phi_*(a) \cdot [b] = (s[\rho']) \cdot (sk[\rho])=s^2 k,$$
where $\rho'$ is an essential arc connecting the two  boundary components of $A.$ 

The inequality $s \leq s^2 k$ holds if and only if $s=0$ or $s=k=1$. 
\end{proof}

The next is an example of a non-positive braid on which the BE-inequality is sharp. 

\begin{example}
\label{exam:BEsharp}
Let $(S_{g, 1}, \phi)$ be a general open book with connected binding. 
We do {\em not} assume the contact structure $\xi_{(S, \phi)}$ is tight. 
Fix $n \geq 1$. 
Let $\widehat b$ be an $n$-braid represented by the braid word:
$$b= \sigma_1 \cdots \sigma_{n-1} [\rho_1, \rho_2] \cdots [\rho_{2g-1}, \rho_{2g}]$$ 
where $[a, b] = aba^{-1}b^{-1}$, so $b$ is not a positive word.  
The braid $\widehat b$ is a positive push-off of the binding and the algebraic intersection  number of $\widehat b$ and a page surface $S$ is $n$. 
Since $[b]=0 \in H_1(S, \Z)$ we can choose $a=0 \in H_1(S, \partial S; \Z)$. 
Then (\ref{eq:self-linking}) implies that $$sl(\widehat{b}, [\Sigma]) = -n + \widehat{\exp}(b) = -n + (n-1+2g) = - \chi(S),$$ 
i.e., the BE-inequality is sharp on $\widehat b$. 
\end{example}

More general results by Hedden \cite{H} (for the standard tight $3$-sphere) and Etnyre and Van~Horn-Morris \cite{EV} (for general tight contact manifolds) have been known:  
Let $(M, \xi)$ be a {\em tight} contact structure. 
Assume that a manifold $M$ admits an open book decomposition $(S, \phi)$. 
Let $K$ be the binding and $\Sigma$ a page of the open book. 
Then $sl(K,[\Sigma]) = -\chi(\Sigma)$ if and only if $\xi$ is supported by $(S, \phi)$ or obtained from $\xi_{(S, \phi)}$ by adding Giroux torsion.

\subsection{Permutation of a braid word}

Let $\widehat b$ and $\widehat b'$ be closed braids with respect to  $(S, \phi)$ represented by the braid words 
\begin{gather*}
\begin{cases}
b = b_1 \cdots b_k, \\ 
b'= b_{\tau(1)} \cdots b_{\tau(k)},
\end{cases}
\end{gather*}
where $b_i \in \{ \sigma_1^\pm, \ldots, \sigma_{n-1}^\pm, \rho_1^\pm, \ldots, \rho_{r-1}^\pm\}$ and $\tau\in \mathfrak S_k$ is a permutation of $k$ letters.

Assume that $\widehat b$ is null-homologous. 
Let $a \in H_{1}(S,\partial S)$ be a homology class satisfying (\ref{eqn:Seifert}) i.e., $[b]=[b']=a-\phi_*(a)$. 
Thus we have Seifert surfaces $\Sigma$ and $\Sigma'$ of $\widehat{b}$ and $\widehat{b'}$, respectively, defined by the same homology class $a$.

\begin{proposition}
Assume that $(S,\phi)$ is a planar open book. 
Then the self-linking numbers of the two closed braids $\widehat b$ and $\widehat b'$ are equal:
$$ sl(\widehat b, [\Sigma]) = sl(\widehat b', [\Sigma']).$$
\end{proposition}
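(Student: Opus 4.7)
The plan is to apply the self-linking number formula for planar open books stated in Subsection 1.3,
$$sl(\widehat{b}, [\Sigma])= -n + \exp(b) - \phi_*(a)\cdot [b] + c'([\phi], a),$$
and check that every term on the right is insensitive to the permutation $\tau$ of the letters of $b$.

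The term $-n$ is just the braid index and is independent of the word. The exponent sum $\exp(b)=\sum_{i=1}^{k}\e_i$, where $\e_i=\pm 1$ is the sign of $b_i$, is a sum of integers, hence $\exp(b)=\exp(b')$. The homology class $[b]=\sum_i \e_i[b_i] \in H_1(S;\Z)$ is likewise an abelian sum, so $[b]=[b']$. By hypothesis both $\Sigma$ and $\Sigma'$ are constructed from the same $a\in H_1(S,\partial S;\Z)$ satisfying $a-\phi_*(a)=[b]=[b']$; this is consistent precisely because permutation preserves $[b]$. Consequently the intersection pairing $\phi_*(a)\cdot [b]$ coincides for the two formulas. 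Finally, the correction term $c'([\phi],a)$ depends only on $[\phi]$ and $a$, not on any braid word, so it is the same. Assembling these four observations yields $sl(\widehat{b},[\Sigma])=sl(\widehat{b'},[\Sigma'])$.

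There is essentially no analytic obstacle; the real content of the proposition is the planarity hypothesis, and the argument is transparent only because of it. In higher genus, Theorem~\ref{theorem:sl-formula} uses the generalized exponent sum
$$\widehat{\exp}(b) = \sum_{i=1}^l \e_i - \sum_{1\le j<i\le l}\e_i\e_j\, [b_j]\cdot [b_i],$$
whose correction term is sensitive to the ordering of letters because the intersection pairing on $H_1(S;\Z)$ is nontrivial. When $S$ is planar, however, any two classes in $H_1(S;\Z)$ can be represented by disjoint curves, so this pairing vanishes identically, $\widehat{\exp}(b)=\exp(b)$, and the only possible obstruction to permutation invariance disappears. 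Thus the main point of the proof is to recognize that all four summands in the planar self-linking formula are either manifestly abelian invariants of the word or depend purely on the data $([\phi],a,n)$ that both braids share.
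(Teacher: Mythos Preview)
Your proof is correct and follows essentially the same approach as the paper's: both arguments reduce to the observation that in the planar case $\widehat\exp(b)=\exp(b)$ (because the intersection form on $H_1(S;\Z)$ vanishes), so every term in the self-linking formula is either an abelian invariant of the word or depends only on $([\phi],a,n)$. The paper appeals to Theorem~\ref{theorem:sl-formula} directly, while you cite the specialized planar formula of \S1.3; this is a cosmetic difference only.
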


\begin{proof}
We have $$\widehat\exp(b)= \exp(b)= \exp(b')= \widehat\exp(b')$$ where the first and the third equalities follow from the planar condition of $S$ and the second equality holds because the braid words $b$ and $b'$ are related to each other by a permutation. 
Since $[b]=[b'] \in H_1(S; \Z)$ 
we may take the same $a \in H_1(S, \partial S)$ for $b$ and $b'$. By Theorem~\ref{theorem:sl-formula} we get $sl(\widehat b, [\Sigma]) = sl(\widehat b', [\Sigma'])$ for some Seifert surfaces $\Sigma$ and $\Sigma'$.
\end{proof}

The assumption that $S$ is planar is necessary: 
Let $(S_{g,1},id)$ be a non-planar open book ($g>0$). 
Consider the braid $b=[\rho_{1},\rho_{2}]= \rho_{1}\rho_{2}\rho_{1}^{-1}\rho_{2}^{-1}$ and its permutation
$b'=\rho_1 \rho_1^{-1}\rho_2 \rho_{2}^{-1}$.
Since $[b]=[b']=0$ we may take $a=0$.
Clearly $\widehat{b'}$ is the unknot and $sl(\widehat{b'},[\Sigma'])=-1$, whereas $sl(\widehat{b},[\Sigma])= +1$ as discussed in Example~\ref{exam:BEsharp} .

\subsection{Contact surgery preserving the self-linking number}

For a Legendrian knot $L$ in a contact manifold $(M,\xi)$ one performs $(\pm 1)$-surgery along $L$ with respect to the contact framing \cite{dg}. Namely, the contact structure $\xi$ on 
the complement of $L$ 
extends to a contact structure $\xi_{L^{\pm 1}}$ on the surgered manifold $M_{L^{\pm 1}}$.

\begin{proposition}
Let $L$ be a Legandrian knot and $K$ a null-homologous transverse knot with a Seifert surface $\Sigma$ in $(M,\xi)$. 
If the algebraic intersection number of $\Sigma$ and $L$ is zero, then in $(M_{L^{\pm 1}},\xi_{L^{\pm 1}})$ there exists a Seifert surface $\Sigma_{L^{\pm 1}}$ for the corresponding transverse knot $K_{L^{\pm 1}}$  such that
\[ sl(K, [\Sigma]) = sl(K_{L^{\pm 1}}, [\Sigma_{L^{\pm 1}}]).\]
\end{proposition}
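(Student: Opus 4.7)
My plan is to prove the proposition by a direct geometric argument that bypasses the braid-theoretic formula. The strategy is to produce a Seifert surface $\tilde{\Sigma}$ homologous to $\Sigma$ in $H_{2}(M,K;\Z)$ that is disjoint from $L$; such a surface survives the contact surgery unchanged, and the two self-linking numbers can then be read off from the same Euler-number computation on the common subset of the two manifolds.

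First I would put $K$ and $L$ in general position so that they are disjoint, and isotope $\Sigma$ to be transverse to $L$. The hypothesis $[\Sigma]\cdot L=0$ then says that $\Sigma\cap L$ consists of $k_{+}$ positive and $k_{-}=k_{+}$ negative intersection points. Pair these points, and for each pair $(p_{+},p_{-})$ select a subarc $\gamma\subset L\setminus K$ joining them. Tubing $\Sigma$ along each such $\gamma$ --- by removing small disks around $p_{\pm}$ and gluing in an annulus inside a tubular neighbourhood of $\gamma$ with orientations compatible with those of $\Sigma$ --- produces a Seifert surface $\tilde{\Sigma}$ of $K$ that is disjoint from $L$.

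Next I would verify that $[\tilde{\Sigma}]=[\Sigma]$ in $H_{2}(M,K;\Z)$. For each pair, the symmetric difference of $\Sigma$ and $\tilde{\Sigma}$ consists of two meridional disks and the attached annulus, which together form a 2-sphere sitting inside a tubular neighbourhood of $\gamma\subset L$. Since any tubular neighbourhood of an embedded arc in a 3-manifold is a 3-ball, the sphere bounds, so the modification does not change the class in $H_{2}(M,K;\Z)$; in particular $sl(K,[\tilde{\Sigma}])=sl(K,[\Sigma])$ in $(M,\xi)$. I then set $\Sigma_{L^{\pm 1}}:=\tilde{\Sigma}$, viewed in $M_{L^{\pm 1}}$ via the inclusion $\tilde{\Sigma}\subset M\setminus N(L)\subset M_{L^{\pm 1}}$, and invoke the fact that contact $(\pm 1)$-surgery alters $\xi$ only inside $N(L)$. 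The bundle $\xi|_{\tilde{\Sigma}}$ coincides with $\xi_{L^{\pm 1}}|_{\tilde{\Sigma}}$ as an oriented rank-2 bundle, so the same nowhere-vanishing section $s$ works in both calculations; since $K$ and its push-off $K^{s}$ both lie in $M\setminus N(L)$, the intersection $K^{s}\cdot\tilde{\Sigma}$ is computed in the common region and is the same in each contact manifold. Consequently $sl(K_{L^{\pm 1}},[\Sigma_{L^{\pm 1}}])=sl(K,[\tilde{\Sigma}])=sl(K,[\Sigma])$.

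The main obstacle is the homological step: confirming that the tubing operation really does not alter the class in $H_{2}(M,K;\Z)$, so that replacing $\Sigma$ by $\tilde{\Sigma}$ leaves the self-linking number in $(M,\xi)$ unchanged. One must also be careful to choose the connecting arcs $\gamma\subset L$ and the tubular neighbourhoods small enough that the tubing spheres are pairwise disjoint and avoid $K$; once this is arranged, the remainder of the argument reduces to the local nature of contact surgery and a routine comparison of Euler-number computations on $M\setminus N(L)$.
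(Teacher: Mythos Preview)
Your argument is correct and is genuinely different from the paper's. You give a direct geometric proof: tube $\Sigma$ along subarcs of $L$ joining oppositely signed intersection points to obtain a homologous Seifert surface $\tilde\Sigma$ disjoint from $L$, then observe that both the contact plane field and the intersection count $K^{s}\cdot\tilde\Sigma$ are supported in $M\setminus N(L)$, hence unchanged by the surgery. The only point to tighten is the embeddedness of the tubed surface: rather than pairing all the points at once, one should proceed inductively, at each stage choosing an arc of $L$ between two \emph{adjacent} intersection points of opposite sign (such a pair exists whenever the algebraic count is zero and the intersection is nonempty), so that the tube meets $\Sigma$ only in the two end disks. The paper instead realises $L$ on a page of a supporting open book $(S,\phi)$ with contact framing equal to page framing, so that the surgered manifold is supported by $(S,\phi\circ T_{L}^{\mp 1})$ and $K$ is represented by the same braid word in both open books; the hypothesis $[\Sigma]\cdot[L]=0$ translates into $a\cdot[L]=0$ for $a=[\Sigma\cap S_{1}]$, whence $(T_{L}^{\mp 1})_{*}a=a$, and the crossed-homomorphism properties of $c$ give $c([\phi\circ T_{L}^{\mp 1}],a)=c([\phi],a)$, so every term in the self-linking formula of Theorem~\ref{theorem:sl-formula} is unchanged. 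Your route is more elementary and self-contained; the paper's route is there precisely to exhibit the formula at work, and it also makes explicit which open book and which homology class $a$ produce the surface $\Sigma_{L^{\pm 1}}$.
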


\begin{proof}
Take an open book decomposition $(S,\phi)$ compatible with $(M,\xi)$ such that the Legendrian knot $L$ sits on the  page $S_1(=S_{t=1})$ and the contact framing of $L$ and the surface framing of $L$ induced by $S_1$ agree \cite{AO, Pl}. 
The open book $(S,\phi \circ T_{L}^{\mp 1})$ supports $(M_{L^{\pm 1}},\xi_{L^{\pm 1}})$, where $T_{L}$ denotes the right-handed Dehn twist along $L$.  
In particular, if a closed braid $\widehat{b}$ with respect to  $(S, \phi)$ represents the transverse knot $K$ then the same braid $\widehat{b}$ with respect to  $(S,\phi \circ T_{L}^{\mp 1})$ represents $K_{L^{\pm 1}}$.

Let $a := [\Sigma \cap S_1] \in H_{1}(S,\partial S)$. 
Since $\phi_*(a)=[\Sigma\cap S_0]$ and $\partial \Sigma =  \widehat{b}$ we have $[b]=a - \phi_*(a)$. 
Since $L \subset S_1$ we have $a \cdot [L]= [\Sigma] \cdot [L] = 0$. 
Thus, 
${T_{L}^{\mp 1}}_*(a) = a$ and 
\begin{equation}\label{eq-1}
\phi_*(a)= \phi_* ({T_{L}^{\mp 1}}_*(a))=(\phi \circ T_{L}^{\mp 1})_*(a). 
\end{equation}
We have 
\begin{equation}\label{eq-2}
c([\phi \circ T_{L}^{\mp 1}], a) = 
c([T_L^{\mp 1}], a) + c([\phi], {T_L^{\mp 1}}_* (a)) =c([\phi], {T_L^{\mp 1}}_* (a)) = c([\phi], a)
\end{equation}
where the first (resp. the second) equality holds by Item (2) (resp. (4)) of \cite[Proposition~3.12]{ik1-1}.

Finally, Theorem~\ref{theorem:sl-formula} together with (\ref{eq-1}) and (\ref{eq-2}) implies that there exists a Seifert surface, $\Sigma_{L^{\pm 1}}$, such that $sl(K, [\Sigma]) = sl(K_{L^{\pm 1}}, [\Sigma_{L^{\pm 1}}])$.
\end{proof}

\subsection{Twisting by elements in the Johnson kernel}

Let $S=S_{g, 1}$ be a surface with genus $g \geq 3$ and connected boundary,  and $\widehat b$ a closed braid with respect to the open book $(S, \phi)$. 
For an element of the Torelli group $\psi \in \mathcal{I}_{g}$, let $\widehat b_{\psi}$ be the closed braid with respect to  the open book $(S, \psi\phi)$ such that $\widehat b$ and $\widehat b_{\psi}$ are represented by the same braid word, $b$, in $\{ \sigma_1^\pm, \ldots, \sigma_{n-1}^\pm, \rho_1^\pm, \ldots, \rho_{2g}^\pm \}$. 

Assume that $\widehat{b}$ is null-homologous. 
By Lemma~\ref{lemma-a} there exists $a \in H_{1}(S,\partial S)$ with $\phi_*(a)-a = [b]$. 
Since the Torelli group acts on $H_{1}(S,\partial S)$ trivially, $a$ satisfies 
$$(\psi \cdot \phi)_*(a) -a = \phi_*(a)-a = [b].$$
In particular, the proof of \cite[Claim 3.8]{ik1-1} implies that $\widehat b_{\psi}$ is also null-homologous.
Recall the {\em Johnson kernel} $\mathcal{K}_{g}$, the kernel of the Johnson homomophism  $\tau: \mathcal I_g \to \wedge^3 H$ where $H=H_1(S;\Z)$.

\begin{proposition}
\label{prop:JK}
Let $\Sigma$ and $\Sigma_{\psi}$ be Seifert surfaces constructed by using the above $a \in H_{1}(S,\partial S)$ and the method as in the proof of \cite[Theorem 3.10]{ik1-1}.  
If $\psi \in \mathcal{K}_{g}$, then 
$$sl(\widehat b, [\Sigma]) = sl(\widehat b_{\psi}, [\Sigma_{\psi}]).$$
\end{proposition}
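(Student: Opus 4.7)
The plan is to apply the self-linking number formula (Theorem~\ref{theorem:sl-formula}) to $(\widehat b, \Sigma)$ and $(\widehat b_\psi, \Sigma_\psi)$ using the same braid word $b$ and the same class $a \in H_1(S, \partial S; \Z)$, and then to compare the four terms on the right-hand side of (\ref{eq:self-linking}) one at a time. The $-n$ and $\widehat{\exp}(b)$ contributions depend only on the braid word $b$, hence agree trivially. So the task reduces to showing
$$\phi_*(a)\cdot[b] = (\psi\phi)_*(a)\cdot[b] \quad\text{and}\quad c([\phi],a)=c([\psi\phi],a).$$

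For the intersection pairing term, I would use that $S=S_{g,1}$ has connected boundary: the long exact sequence of $(S,\partial S)$ shows that inclusion induces an isomorphism $H_1(S;\Z) \xrightarrow{\cong} H_1(S,\partial S;\Z)$, because $\partial S$ is null-homologous in $S$. Since $\psi\in\mathcal{K}_g\subset \mathcal{I}_g$ acts trivially on $H_1(S;\Z)$, it also acts trivially on $H_1(S,\partial S;\Z)$. In particular $(\psi\phi)_*(a)=\psi_*(\phi_*(a))=\phi_*(a)$, which yields the first equality.

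For the remaining equality, I would invoke the decomposition $c([\phi],a) = c'([\phi],a) - 2k([\phi],a)$ recalled after Theorem~\ref{theorem:sl-formula}. The homological part $c'([\phi],a)$ depends on $\phi$ only through the homology intersections of the $[\rho'_i]$ with the $[\phi_*(\rho'_i)]$, so the same Torelli argument as above gives $c'([\psi\phi],a)=c'([\phi],a)$. For the Morita crossed homomorphism part, the cocycle identity furnishes
$$k([\psi\phi],a) = k([\phi],a) + k([\psi], \phi_*(a)),$$
and the key step is that the last term vanishes: since $k = C\circ\kappa$ where $\kappa|_{\mathcal{I}_g}=\tau$ and $\mathcal{K}_g=\ker\tau$, we have $\kappa(\psi)=0$, hence $k([\psi],\cdot)\equiv 0$. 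Combining everything, all four terms of (\ref{eq:self-linking}) coincide for $\phi$ and $\psi\phi$, giving the proposition. The main obstacle is the last vanishing $k([\psi],\phi_*(a))=0$; this is precisely the characterization of the Johnson kernel via Morita's framework, and it is where the stronger hypothesis $\psi\in\mathcal{K}_g$ (rather than merely $\psi\in\mathcal{I}_g$) is actually used, the weaker Torelli hypothesis being sufficient only for the other three terms.
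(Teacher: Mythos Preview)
Your argument is correct, and in fact the paper itself acknowledges this route in the first sentence of its proof: ``This essentially follows from the formula $c(\phi,a) = c'(\phi,a) - 2k(\phi,a)$ and the fact that $k$ is a contraction of the Johnson homomorphism.'' You have simply filled in those details, using that any crossed-homomorphism extension of $\tau$ agrees with $\tau$ on $\mathcal{I}_g$ (coboundaries vanish there), so $\kappa(\psi)=\tau(\psi)=0$ for $\psi\in\mathcal{K}_g$ and hence $k([\psi],\cdot)\equiv 0$.

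The paper, however, deliberately avoids invoking the decomposition $c=c'-2k$ and the identification $k=C\circ\kappa$. Instead it argues directly with $c$ itself: by Johnson's theorem (valid for $g\geq 3$), $\mathcal{K}_g$ is generated by Dehn twists $T_C$ along \emph{separating} curves $C$; since $\partial S$ is connected, such a $C$ satisfies $x\cdot[C]=0$ for every $x\in H_1(S,\partial S)$, and then \cite[Proposition~3.12-(4)]{ik1-1} gives $c([T_C],x)=0$. Iterating the cocycle relation for $c$ (\cite[Proposition~3.12-(2)]{ik1-1}) yields $c([\psi],\phi_*(a))=0$ and hence $c([\psi\phi],a)=c([\phi],a)$. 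Your approach is cleaner conceptually and explains transparently \emph{why} the Johnson kernel is the right hypothesis; the paper's approach is more self-contained, staying entirely within the toolkit of \cite{ik1-1} and Johnson's generating set rather than appealing to Morita's $\kappa$ and its restriction property. Note also that the paper's route is where the standing hypothesis $g\geq 3$ is genuinely used (Johnson's generation theorem), whereas your argument needs only that $\kappa$ extends $\tau$.
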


\begin{proof}

This essentially follows from the formula $c(\phi,a) = c'(\phi,a)- 2k(\phi,a)$ and the fact that $k$ is a contraction of the Johnson homomorphism, but here we give a more direct proof that does not use this explicit formula. 

The Johnson kernel $\mathcal K_g$ is generated by Dehn twists about separating simple closed curves when $g \geq 3$, see \cite{J}. 
We may assume that $\psi = T_{C_l} \circ \dots \circ T_{C_1}$ for some separating simple closed curves $C_1, \dots, C_l$. 
Since $S$ has connected boundary the separating property implies that $x \cdot [C_i]=0$ for any $x \in H_1(S, \partial S)$. 
Moreover, by \cite[Proposition 3.12-(4)]{ik1-1} we have $c([T_{C_i}], x)=0$.
Repeatedly using the crossed homomorphism property of the function $c$, proven in \cite[Proposition 3.12-(2)]{ik1-1}, we have 
$$
c([\psi], \phi_* (a))=c([T_{C_1}], a) + c([T_{C_l} \circ \dots \circ T_{C_2}], {T_{C_1}}_*(a)) = \dots = 0,
$$
and $$c([\psi\phi], a) = c([\phi], a) + c([\psi], \phi_*(a))= c([\phi], a).$$ 
Therefore, by Theorem~\ref{theorem:sl-formula} we have $sl(\widehat b, [\Sigma]) = sl(\widehat b_{\psi}, [\Sigma_{\psi}])$. 
\end{proof}

As the definition of $c([\phi],a)$ shows, Proposition~\ref{prop:JK} does not hold if we simply assume $\psi \in \mathcal{I}_{g}$.

\subsection{Twisting along a binding component}

Let $\widehat b$ be a null-homologous $n$-braid in the open book $(S, \phi)$ where $S \neq D^2$. 
Let $\Sigma$ be a Seifert surface of $\widehat b$ admitting an open book foliation $\F(\Sigma)$. 
By braid isotopy, we may assume that 
\begin{itemize}
\item
the $n$ points $\widehat b \cap S_1$ are all very close to a binding component, $C$, and 
\item 
each of the $n$ points is joined with $C$ by an a-arc in the foliation $\F(\Sigma)$. 
\end{itemize}
Fix a number $k\in\Z$. 
Let $\tau_C$ be the braid word that corresponds to the $n$ points winding around $C$ together (like the usual {\em full twist} of $n$-braids in braid theroy). 
Let $\widehat b'$ be the braid closure of the braid word, 
$$b'=b \cdot (\tau_C)^{k} \quad (\mbox{read from the right to left}),$$
living in the open book $(S, {T_C}^k \circ \phi)$ where $T_C$ is the right-handed Dehn twist along $C$.  
We may consider that $\widehat b'$ is a natural extension of the original braid $\widehat b$ under adding Dehn twist ${T_C}^k$ to the monodromy $\phi$ of the original open book. 

Let $\bf a_+$ be the number of positive elliptic points in the open book foliation $\F(\Sigma)$ that are sitting on $C$ as the end points of a-arcs. 
Let $\bf b_\pm$ be the number of $\pm$ elliptic points of $\F(\Sigma) \cap C$ that are the end points of b-arcs. These numbers satisfy 
$$\Sigma \cdot C ={\bf a}_+ + {\bf b}_+ - {\bf b}_-.$$ 

\begin{proposition}
Let $\widehat b$ (resp. $\widehat b'$) be the above closed braid with respect to the open book $(S, \phi)$ (resp. $(S, {T_C}^k \circ \phi)$). 
Assume that 
$${\bf a}_+=  \Sigma \cdot C = n.$$ 
Then there exists a Seifert surface $\Sigma'$ of $\widehat b'$ that is a natural extension of $\Sigma$ and satisfies 
$$sl(\widehat b',  [\Sigma']) =  sl(\widehat b, [\Sigma]).$$
\end{proposition}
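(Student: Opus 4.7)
The plan is to apply Theorem~\ref{theorem:sl-formula} to both $(\widehat b, \Sigma)$ and $(\widehat b', \Sigma')$ with a common homology class $a = [\Sigma \cap S_1] \in H_1(S, \partial S; \Z)$, and to show that the three corresponding changes in the self-linking formula cancel. To this end I first construct $\Sigma'$ as a local modification of $\Sigma$: outside a tubular neighborhood $N(C)$ of the binding component $C$, set $\Sigma' = \Sigma$; inside $N(C)$ the hypothesis $\mathbf{a}_+ = \Sigma \cdot C = n$ says $\Sigma$ has exactly $n$ disk sheets approaching $C$ at positive elliptic points, and we replace them by $n$ sheets in the twisted neighborhood of $C$ in $(S, T_C^k \circ \phi)$ whose boundaries realize the full twist $\tau_C^k$. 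This $\Sigma'$ is a natural extension of $\Sigma$ with $\partial \Sigma' = \widehat b'$ and $[\Sigma' \cap S_1] = a$.

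Next I check that $a$ satisfies the Seifert relation for $\widehat b'$ in $(S, T_C^k \circ \phi)$. The hypothesis $\mathbf{a}_+ = n$ forces $\mathbf{a}_- = 0$ and $\mathbf{b}_+ = \mathbf{b}_-$, so $|a \cdot [C']| = n$ where $C' \subset S$ is a curve parallel to $C$ along which $T_C$ acts. Since $\phi$ fixes $\partial S$ pointwise, $\phi_*[C'] = [C']$, hence $\phi_*(a) \cdot [C'] = a \cdot [C']$ and $[b] \cdot [C'] = 0$. The Dehn twist formula gives $(T_C^k \circ \phi)_*(a) = \phi_*(a) \pm kn[C']$, and with $[\tau_C] = \mp n[C']$ (the signs being determined by consistent orientation conventions for the right-handed Dehn twist and the full twist), the identity $[b'] = a - (T_C^k \circ \phi)_*(a)$ holds.

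Applying Theorem~\ref{theorem:sl-formula} and taking the difference $sl(\widehat b', [\Sigma']) - sl(\widehat b, [\Sigma])$ yields three contributions: (i) $\widehat{\exp}(b') - \widehat{\exp}(b) = k\widehat{\exp}(\tau_C) - k[b] \cdot [\tau_C] = k\widehat{\exp}(\tau_C)$, using $[b] \cdot [\tau_C] = \pm n[b] \cdot [C'] = 0$; (ii) a change in $(\,\cdot\,)_*(a) \cdot [b]$ that simplifies via $[C'] \cdot [C'] = 0$ and the known values of $\phi_*(a) \cdot [C']$, $[\tau_C]$, and $[b] \cdot [C'] = 0$ to a term depending only on $n$ and $k$; and (iii) $c([T_C^k \circ \phi], a) - c([\phi], a) = c([T_C^k], \phi_*(a))$ by the crossed-homomorphism property of $c$ from \cite[Proposition 3.12]{ik1-1}.

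The main obstacle is the evaluation of $c([T_C^k], \phi_*(a))$. I plan to handle this by constructing an explicit OB cobordism from $T_C^k(\phi_*(a)^{\mathrm{NF}})$ to $(T_C^k \circ \phi)_*(a)^{\mathrm{NF}}$: geometrically realize $T_C^k$ on the normal form of $\phi_*(a)$ by sweeping each of the $n$ arcs crossing $C'$ around $C'$ a total of $k$ times, and then count the signed hyperbolic singularities of the induced foliation. The resulting closed-form expression in $n$ and $k$, combined with the value of $\widehat{\exp}(\tau_C)$ read off from any braid-word expression of the full twist near $C$, should make pieces (i)--(iii) sum to zero, yielding $sl(\widehat b', [\Sigma']) = sl(\widehat b, [\Sigma])$.
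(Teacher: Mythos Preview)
Your approach is genuinely different from the paper's, and as written it has real gaps.

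The paper does \emph{not} route through Theorem~\ref{theorem:sl-formula} at all. Instead it builds $\Sigma'$ explicitly by leaving $\Sigma$ unchanged for $\e\leq t\leq 1$, setting $\Sigma'\cap S_0 = T_C^k(\Sigma\cap S_0)$, and then describing a movie in the thin slab $0\leq t\leq \e$ that interpolates between these. The crucial point is that the hypothesis ${\bf a}_+=\Sigma\cdot C$ forces ${\bf b}_+={\bf b}_-$, so the $2m$ b-arcs meeting $C$ can be paired by sign; each pair contributes one positive and one negative hyperbolic singularity during the interpolation, and no new elliptic points appear. One then reads off the result directly from $sl=-(e_+-e_-)+(h_+-h_-)$.

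Your construction of $\Sigma'$ is where the first gap lies. You say the hypothesis means ``$\Sigma$ has exactly $n$ disk sheets approaching $C$'' and you only modify those; but this ignores the b-arcs ending on $C$, which are exactly where the nontrivial part of the modification lives. Without handling the b-arcs you do not actually have a Seifert surface for $\widehat{b'}$ in $(S,T_C^k\circ\phi)$, and in particular you have not used the essential consequence ${\bf b}_+={\bf b}_-$ of the hypothesis.

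The second gap is that pieces (i)--(iii) are never actually computed; you only assert they ``should'' sum to zero. Evaluating $\widehat{\exp}(\tau_C^k)$ requires a specific braid word for the full twist near $C$ and a nontrivial calculation of the cross terms. More seriously, to evaluate $c([T_C^k],\phi_*(a))$ via an OB cobordism you would have to build precisely the interpolation the paper builds in the slab $0\leq t\leq \e$ and count its signed hyperbolic points---i.e., reproduce the paper's entire geometric argument---and then still verify an algebraic cancellation with (i) and (ii) on top of that. So the detour through the self-linking formula does not save work; it adds it. If you want to salvage your route, you must (a) extend your $\Sigma'$ to account for the b-arcs and (b) actually carry out the three computations, which will force you back to the paper's pairing argument in any case.
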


\begin{proof}
Fix a small $\e>0$. 
We may assume that for $0\leq t \leq \e$ the multi-curve $\Sigma \cap S_t$ is constant. 
We will define a Seifert surface $\Sigma'$ of $\widehat b'$ by modifying $\Sigma$. 

For $\e \leq t \leq 1$ we define $\Sigma'$ by 
$$\Sigma' \cap S_t = \Sigma \cap S_t .$$ 

When $t=0$ we define $\Sigma'$ by 
$$\Sigma' \cap S_0 = {T_C}^k (\Sigma \cap S_0) = ({T_C}^k  \circ\phi) (\Sigma' \cap S_1).$$
That is, in a small collar neighborhood $C \times I$ of $C$ in the page $S_0$, all the arcs ending at $C$ are winding around $C$ $k$ times. 
See the movie presentation depicted in Figure~\ref{CxI}. 
\begin{figure}[htbp] 
\begin{center}
\SetLabels
(0*1) $C\times\{1\}$\\
(.4*1) $C\times\{0\}$\\
(.2*.75) a-arc\\
(.22*.7) $t=0$\\
(.78*.7) $t=\e/3$\\
(.78*.33) $t=2\e/3$\\
(.22*-.02) $t=\e$\\
(.45*.75) b-arc\\
(.45*.95) b-arc\\
\endSetLabels
\strut\AffixLabels{\includegraphics*[height=15cm]{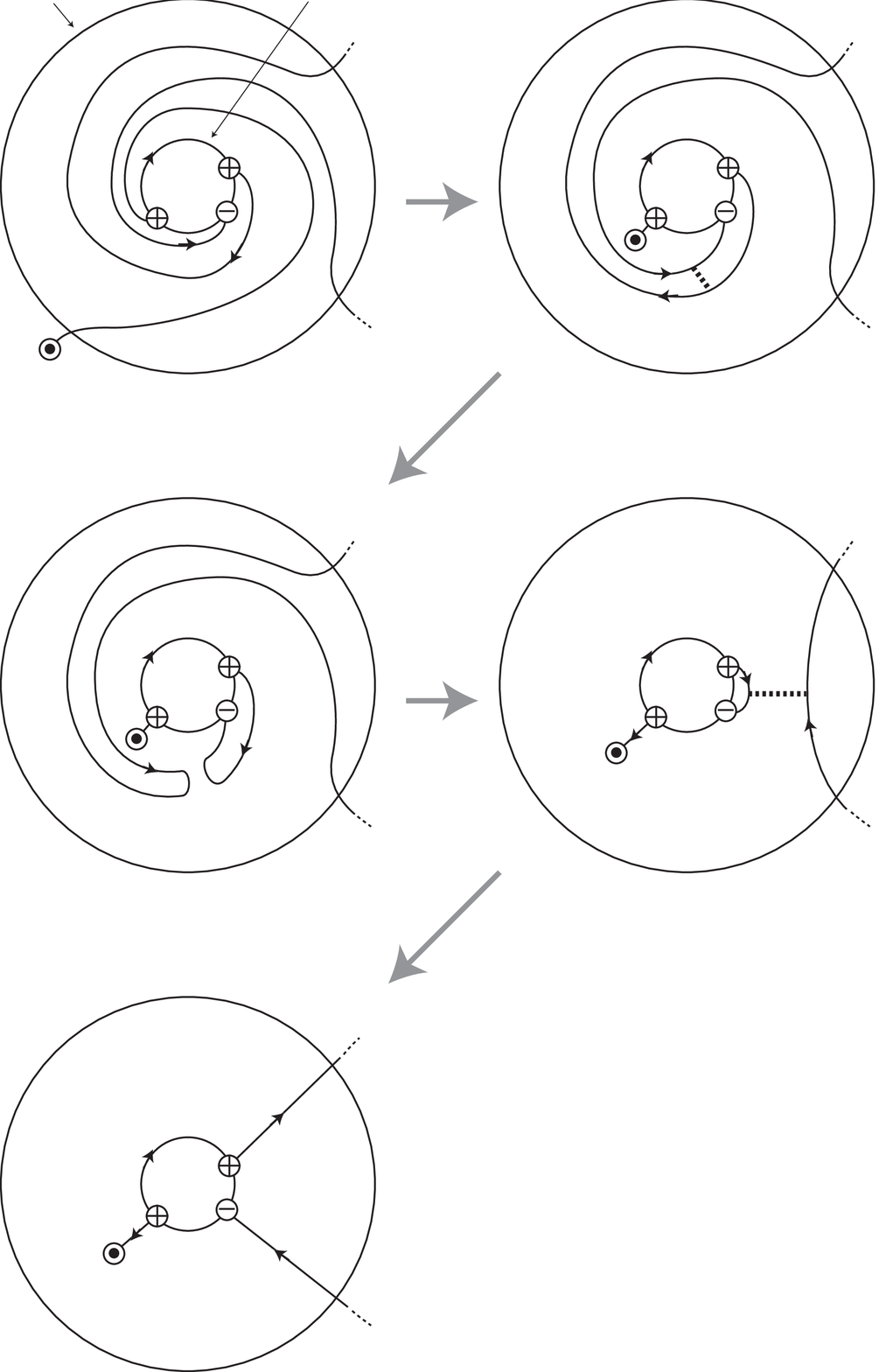}}
\caption{Construction of $\Sigma'$ for $0\leq t\leq \e$ in the region $C\times I$, where $k=1$ and $m=1$.
The hyperbolic point at $t=\e/3$ is negative and at $t=2\e/3$ is positive.}\label{CxI}
\end{center}
\end{figure}

For $0<t <\e/3$ by isotopy we unwind all the a-arcs in the multi-curve $\Sigma' \cap S_0$ ending at $C$. 
See the first row of Figure~\ref{CxI}. 
In the page $S_{\e/3}$ all the a-arcs in $C\times I$ are very close to $C\times \{0\}$. 
The rest of the a-arcs and b-arcs (including the ones ending at $C$) stay fixed. 
Since ${\bf a}_+ = n$ this construction corresponds to the added term $(\tau_C)^k$ in the word $b'=b \cdot (\tau_C)^k$.

The condition ${\bf a}_+=  \Sigma \cdot C$ implies ${\bf b}_+ = {\bf b}_-$. 
Put $m= {\bf b}_+ = {\bf b}_-$. 
In the page $S_{\e/3}$ the region $C\times I$ contains $2m$ b-arcs connecting  points $P_i = \{p_i\}\times \{0\}$ and $Q_i= \{p_i\}\times \{1\}$, where $i=1, \dots, 2m$ and $P_i$ is an elliptic point, and winding $k$ times. See Figure~\ref{CxI-2}. 
\begin{figure}[htbp] 
\begin{center}
\SetLabels
(-.01*1) $C\times\{0\}$\\
(-.01*.72) $C\times\{1\}$\\
(.22*.68) $t=\e/3$\\
(.22*.33) $t=2\e/3$\\
(.22*-.03) $t=\e$\\
(.12*1.01) $P_i$\\
(.18*1.01) $P_{i+1}$\\
(.12*.72) $Q_i$\\
(.18*.72) $Q_{i+1}$\\
(.07*1.01) $P_1$\\
(.4*1.01) $P_{2m}$\\
(.08*.72) $Q_1$\\
(.41*.72) $Q_{2m}$\\
\endSetLabels
\strut\AffixLabels{\includegraphics*[height=15cm]{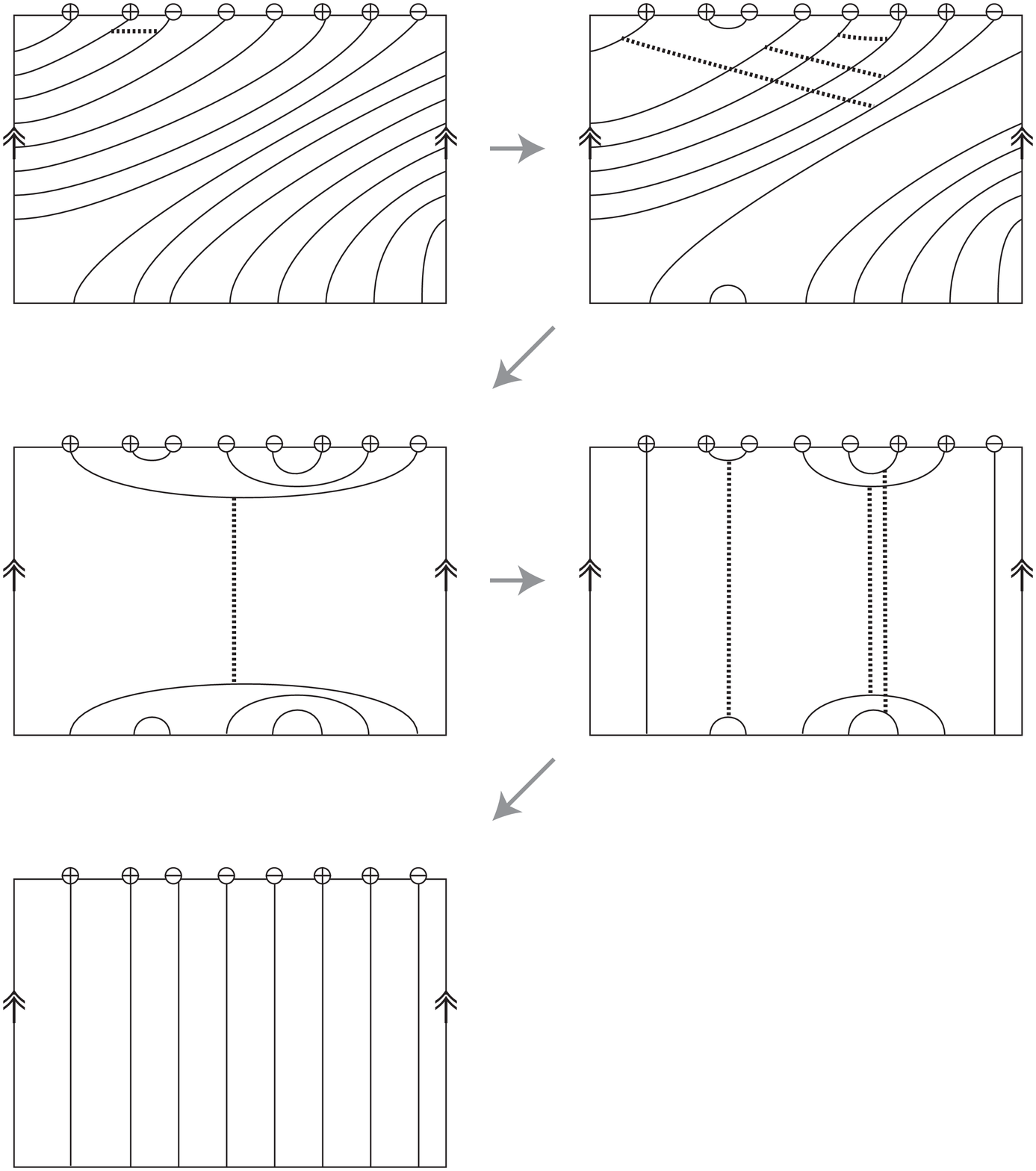}}
\caption{The b-arcs in $C\times I$ where $k=1$ and $m=4$. The a-arcs are omitted for simplicity.}\label{CxI-2}
\end{center}
\end{figure}

For $\e/3 \leq t < 2\e/3$ pair up nearby b-arcs of opposite orientations and join them by describing arcs (the thick dashed lines in Figures~\ref{CxI} and \ref{CxI-2}) then  change the configurations. 
More precisely, suppose that the consecutive elliptic points $P_i$ and $P_{i+1}$ have opposite signs. 
Join the b-arcs ending at $P_i$ and $P_{i+1}$ by a describing arc that does not intersect other b-arcs and a-arcs. 
After configuration change $P_i$ and $P_{i+1}$ are joined by a b-arc. 
Also $Q_i$ and $Q_{i+1}$ are joined by an arc, which possibly becomes sub-arc of a c-circle, but this does not cause any problem for the rest of the argument.

We repeat the above operation for the remaining b-arcs ending at $P_1,\dots, P_{i-1}, P_{i+2}, \dots, P_{2m}$. 
Because ${\bf b}_+ = {\bf b}_-$ all the elliptic points $P_1,\dots, P_{2m}$ can be paired up. 
Near $C\times \{0\}$ (resp. $C\times\{1\}$) we get $m$ nested, boundary parallel,   mutually disjoint b-arcs (resp. arcs).  
This construction introduces $m$ hyperbolic points. 

For $2\e/3 \leq t < \e$ join the same pairs by describing arcs and change the configurations. 
This gives another set of $m$ hyperbolic points. 
As a result we obtain $\Sigma' \cap S_\e = \Sigma \cap S_\e$.

We note that for each pair of b-arcs two hyperbolic points of opposite signs are introduced. 
Therefore the number $h_\pm$ (resp. $h_\pm'$) of $\pm$ hyperbolic points of $\Sigma$ (resp. $\Sigma'$), satisfy 
$$h_+' - h_+ = h_-' - h_-.$$
On the other hand, we do not introduce any new elliptic points during the above construction of $\Sigma'$. 
Hence the numbers of $\pm$ elliptic points for $\Sigma$ and $\Sigma'$ are the same. 
Using the formula $sl(\widehat b, [\Sigma]) = -(e_+-e_-) + (h_+ - h_-)$ the self-linking number does not change under this construction. 
\end{proof}

\section*{Acknowledgement}
The authors would like to thank the referees and Matt Hedden for comments on math and correcting typos. 
T.I. was supported by JSPS Research Grant-in-Aid for Research Activity Start-up (Grant Number 25887030).  
K.K. was partially supported by NSF grant DMS-1206770.

\end{document}